\theoremstyle{plain}
\newtheorem{thm}{Theorem}[section]
\newtheorem{rem}[thm]{Remark}
\newtheorem{ques}[thm]{Question}
\newtheorem{conj}[thm]{Conjecture}
\def\bbb{\mathbb}
\renewcommand{\phi}{\varphi}
\newcommand{\N}{\bbb{N}}
\newcommand{\Z}{\bbb{Z}}
\let\@@pmod\pmod
\DeclareRobustCommand{\pmod}{\@ifstar\@pmods\@@pmod}
\def\@pmods#1{\mkern4mu({\operator@font mod}\mkern 6mu#1)}
\begin{document}

\title[Some observations and speculations on partitions into $d$-th powers]{Some observations and speculations on partitions into $d$-th powers}
\author{Maciej Ulas}

\keywords{partitions into powers, identities, numerical computations}
\subjclass[2010]{05A17, 11P83}
\thanks{The research of the author is supported by the grant of the National Science Centre (NCN), Poland, no. UMO-2019/34/E/ST1/00094}

\begin{abstract}
The aim of this note is to provoke discussion concerning arithmetic properties of function $p_{d}(n)$ counting partitions of an positive integer $n$ into $d$-th powers, where  $d\geq 2$. Besides results concerning the asymptotic behavior of $p_{d}(n)$ a little is known. In the first part of the paper, we prove certain congruences involving functions counting various types of partitions into $d$-th powers. The second part of the paper has experimental nature and contains questions and conjectures concerning arithmetic behavior of the sequence $(p_{d}(n))_{n\in\N}$. They based on our computations of $p_{d}(n)$ for $n\leq 10^5$ in case of $d=2$, and $n\leq 10^{6}$ for $d=3, 4, 5$.
\end{abstract}

\maketitle

\section{Introduction}\label{sec1}

Let $A\subset\N_{+}$ be given and take $n\in\N$. By a partition of a non-negative integer $n$ with parts in $A$, we mean any representation of $n$ in the form
$$
n=a_{1}+\ldots+a_{k},
$$
where $a_{i}\in A$. In particular, if $A=\N_{+}$, then $p(n)$ is the famous partition function introduced by L. Euler and extensively studied by S. Ramanujan. The function $p(n)$ counts the number of partitions with parts in $\N_{+}$, i.e., unrestricted partitions of $n$.

The literature concerning arithmetic properties of functions counting various partitions is enormous. However, the theory is concentrated mainly on the case when the set $A$ is a sum of disjoint arithmetic progressions. In this case, the theory is especially rich because of the connections with modular forms and the general theory of $q$ series (see for example \cite{And1}). One can say that the counting function $A(x)=\#\{a\in A:\;a \leq x\}$ is linear in this case, i.e., $A(x)=O(x)$. There is also a nice theory connected with the set of powers of a fixed integer $m$ (so-called $m$-partitions), i.e., in this case, we have that $A(x)$ has logarithmic growth, i.e., $A(x)=O(\log x)$. On the other side, we know too much about the arithmetic behavior of functions counting partitions into $d$-th powers, where $d\in\N_{\geq 2}$ is fixed. In this case, we have $A=\{k^{d}:\;k\in\N\}$, and thus the growth of $A(x)$ is of the type $x^{1/d}$, and is between two cases mentioned earlier. Let $p_{d}(n)$ denotes the number of partitions of $n$ into $d$-th powers. From the general principles, we know that the ordinary generating function of the sequence $(p_{d}(n))_{n\in\N}$ has the form
$$
P_{d}(q)=\sum_{n=0}^{\infty}p_{d}(n)q^{n}=\prod_{n=1}^{\infty}\frac{1}{1-q^{n^{d}}}.
$$

Very little is know about the arithmetic properties of the sequence $(p_{d}(n))_{n\in\N}$. It seems that up to date the main line of research was the investigations devoted to asymptotic behavior of $p_{d}(n)$. Let us recall that G. H. Hardy and S. Ramanujan claimed \cite{HR}, and E. Maitland Wright proved \cite{MW}, that
$$
\log p_{d}(n)\sim (d+1)\left(\frac{1}{d}\Gamma\left(1+\frac{1}{d}\right)\zeta\left(1+\frac{1}{d}\right)\right)^{\frac{d}{d+1}}n^{\frac{1}{d}}.
$$
The proof of Maitland Wright was very complicated and was simplified by R. Vaughan in the case $d=2$ \cite{V}, and in the general case by A. Gafni \cite{AG}. The proofs of Vaughan and Gafni are based on the circle method approach. Note that recently a new proof, used only saddle point method, was presented by G. Tanenbaum, J. Wu, and Y.-L. Li \cite{TWL}. However, according to our best knowledge, besides identities between partitions into $d$-th powers of various types, which can be deduced from simple manipulations of infinite products and recent result of Ciolan \cite{Cio} who proved that the number of partitions into squares with an even number of parts is asymptotically equal to that of partitions into squares with an odd number of parts, there are no theoretical or experimental results. The absence of such studies was the main motivation for our research.

Let us describe the content of the paper in some details. In Section \ref{sec2} we prove some congruences for functions counting various type of partitions into $d$-th powers, where $d\in\N_{\geq 2}$. In particular, if $A_{2,p_{2}}(n)$ denotes the denotes the number of partitions into $d$-th powers of integers not divisible by $2$ or $p_{2}$, and $B_{2,p_{2}}(n)$ denotes the number of partitions of $n$ into distinct $d$-th powers not divisible by $p_{2}^{d}$, where each part has one among $2^{d}-1$ colors, then $A_{2,p_{2}}(n)\equiv B_{2,p_{2}}(n)\pmod*{2}$.

In Section \ref{sec3}, we present many computational observations based on our computer experiments. In particular, we state several questions and conjectures concerning arithmetic behavior of the sequence $(p_{d}(n))_{n\in\N}$ for $d=2, 3, 4, 5$.

\section{A class of congruences}\label{sec2}

This short section is devoted to the proof of a class of congruences involving partitions into $d$-th powers under certain restrictions. More precisely, let $p_{1}, p_{2}\in\N_{\geq 2}$ and assume that $(p_{1}, p_{2})=1$. Let $A_{p_{1},p_{2}}(n)$ denote the number of partitions into $d$-th powers of integers not divisible by $p_{1}^{d}$ or $p_{2}^{d}$. It is easy to see that the generating function for the sequence $(A_{p_{1},p_{2}}(n))_{n\in\N}$ is the following
$$
\mathcal{A}_{p_{1},p_{2}}(q)=\sum_{n=0}^{\infty}A_{p_{1},p_{2}}(n)q^{n}=\prod_{n=1}^{\infty}\frac{(1-q^{(p_{1}n)^{d}})(1-q^{(p_{2}n)^{d}})}{(1-q^{n^{d}})(1-q^{(p_{1}p_{2}n)^{d}})}.
$$
We prove the following

\begin{thm}\label{thm2}
Let $d\in\N_{+}, n\in\N$ and $p_{1}, p_{2}\in\N_{\geq 2}$. Let $B_{p_{1},p_{2}}(n)$ denote the number of partitions of $n$ into distinct $d$-th powers not divisible by $p_{2}^{d}$, where each part has one among $p_{1}^{d}-1$ colors, and let $C_{p_{1},p_{2}}(n)$ denote the number of partitions of $n$ into $d$-th powers not divisible by $p_{2}^{d}$, where each part has one among $p_{1}^{d}-1$ colors.
\begin{enumerate}
\item If $p_{1}=2$ and $p_{2}$ is odd, then $A_{2,p_{2}}(n)\equiv B_{2,p_{2}}(n)\pmod*{2}$.
\item If $p_{1}\in\mathbb{P}_{\geq 3}$ and $p_{2}\in\N_{\geq 2}, p_{1}\nmid p_{2}$, then for $n\geq 1$ we have
$$
\sum_{i=0}^{n}A_{p_{1},p_{2}}(i)C_{p_{1},p_{2}}(n-i)\equiv 0\pmod*{p_{1}}.
$$
\end{enumerate}
\end{thm}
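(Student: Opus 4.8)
\section*{Proof proposal}

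The plan is to translate both statements into identities between formal power series in $\Z[[q]]$ reduced modulo $p_{1}$, using the product formula for $\mathcal{A}_{p_{1},p_{2}}(q)$ together with the analogous product formulas for the colored counting functions. Each admissible base value $k$ with $p_{2}\nmid k$ contributes, for the \emph{distinct} colored parts counted by $B_{p_{1},p_{2}}$, a factor $(1+q^{k^{d}})^{p_{1}^{d}-1}$ (independently for each of the $p_{1}^{d}-1$ colors, include the part or not), and for the unrestricted colored parts counted by $C_{p_{1},p_{2}}$ a factor $(1-q^{k^{d}})^{-(p_{1}^{d}-1)}$. Hence
$$
\mathcal{B}_{p_{1},p_{2}}(q)=\prod_{p_{2}\nmid k}(1+q^{k^{d}})^{p_{1}^{d}-1},\qquad \mathcal{C}_{p_{1},p_{2}}(q)=\prod_{p_{2}\nmid k}\frac{1}{(1-q^{k^{d}})^{p_{1}^{d}-1}}.
$$

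For part (1) I would reduce modulo $2$ and exploit the Frobenius identity $(1+x)^{2^{d}}\equiv 1+x^{2^{d}}\pmod{2}$, which gives $(1+q^{k^{d}})^{2^{d}-1}\equiv (1-q^{(2k)^{d}})/(1-q^{k^{d}})\pmod{2}$. Taking the product over all $k$ with $p_{2}\nmid k$, the crucial step is to reindex the numerator by $j=2k$: because $p_{2}$ is odd, $p_{2}\nmid k$ is equivalent to $p_{2}\nmid 2k$, so the numerator becomes $\prod_{j\text{ even},\,p_{2}\nmid j}(1-q^{j^{d}})$, which cancels the even part of the denominator and leaves $\prod_{k\text{ odd},\,p_{2}\nmid k}(1-q^{k^{d}})^{-1}$. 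This is exactly $\mathcal{A}_{2,p_{2}}(q)$, the generating function for parts $k^{d}$ with $2\nmid k$ and $p_{2}\nmid k$, whence $A_{2,p_{2}}(n)\equiv B_{2,p_{2}}(n)\pmod{2}$.

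For part (2) I would work modulo the odd prime $p_{1}$ and use the same mechanism in reciprocal form: $(1-q^{k^{d}})^{p_{1}^{d}}\equiv 1-q^{(p_{1}k)^{d}}\pmod{p_{1}}$ yields $(1-q^{k^{d}})^{-(p_{1}^{d}-1)}\equiv (1-q^{k^{d}})/(1-q^{(p_{1}k)^{d}})\pmod{p_{1}}$, giving an explicit product for $\mathcal{C}_{p_{1},p_{2}}(q)$ modulo $p_{1}$. Multiplying by $\mathcal{A}_{p_{1},p_{2}}(q)=\prod_{p_{1}\nmid k,\,p_{2}\nmid k}(1-q^{k^{d}})^{-1}$ and splitting the index set $\{k:p_{2}\nmid k\}$ into its part coprime to $p_{1}$ and its part divisible by $p_{1}$, I expect a telescoping cancellation: writing $k=p_{1}j$ in the surviving factors and using $\gcd(p_{1},p_{2})=1$ (so that $p_{2}\nmid p_{1}j\iff p_{2}\nmid j$) should give $\mathcal{A}_{p_{1},p_{2}}(q)\,\mathcal{C}_{p_{1},p_{2}}(q)\equiv 1\pmod{p_{1}}$. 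Since the coefficient of $q^{n}$ on the left is precisely $\sum_{i=0}^{n}A_{p_{1},p_{2}}(i)C_{p_{1},p_{2}}(n-i)$, comparison with the constant series $1$ yields the claimed congruence for every $n\geq 1$.

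The main obstacle is the bookkeeping of the reindexing of the infinite products so that the divisibility restrictions are faithfully preserved; this is exactly where the coprimality hypothesis is used, namely $p_{2}$ odd in part (1) and, in part (2), both $\gcd(p_{1},p_{2})=1$ (from $p_{1}\nmid p_{2}$ with $p_{1}$ prime) for the substitution $k=p_{1}j$ and the primality of $p_{1}$ for the Frobenius congruence $(1-q^{m})^{p_{1}}\equiv 1-q^{p_{1}m}\pmod{p_{1}}$. Everything else is formal manipulation of convergent products with integer coefficients, so the reductions modulo $p_{1}$ are legitimate.
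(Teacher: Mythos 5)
Your proposal is correct and takes essentially the same route as the paper: both arguments hinge on the Frobenius-type congruence $f(q^{p^{k}})\equiv f(q)^{p^{k}}\pmod*{p}$ for $f\in\Z[[q]]$, applied to the product generating functions to obtain $\mathcal{B}_{2,p_{2}}(q)\equiv\mathcal{A}_{2,p_{2}}(q)\pmod*{2}$ and $\mathcal{A}_{p_{1},p_{2}}(q)\,\mathcal{C}_{p_{1},p_{2}}(q)\equiv 1\pmod*{p_{1}}$, followed by comparison of coefficients. Your restricted-index products with the reindexing $k=p_{1}j$ (resp.\ $j=2k$) are just the paper's manipulation of the ratio $\prod_{n\geq 1}\frac{(1-q^{(p_{1}n)^{d}})(1-q^{(p_{2}n)^{d}})}{(1-q^{n^{d}})(1-q^{(p_{1}p_{2}n)^{d}})}$ carried out factor by factor, so the two proofs coincide in substance.
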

\begin{proof}
From the general theory, it is easy to see that the generating functions of the sequences $(B_{p_{1},p_{2}}(n))_{n\in\N}, (C_{p_{1},p_{2}}(n))_{n\in\N}$ are
\begin{align*}
\mathcal{B}_{p_{1},p_{2}}(q)&=\sum_{n=0}^{\infty}B_{p_{1},p_{2}}(n)q^{n}=\prod_{n=1}^{\infty}\left(\frac{1+q^{n^{d}}}{1+q^{(p_{2}n)^{d}}}\right)^{p_{1}^{d}-1},\\
\mathcal{C}_{p_{1},p_{2}}(q)&=\sum_{n=0}^{\infty}C_{p_{1},p_{2}}(n)q^{n}=\prod_{n=1}^{\infty}\left(\frac{1-q^{(p_{2}n)^{d}}}{1-q^{n^{d}}}\right)^{p_{1}^{d}-1}.
\end{align*}

To get the proof of our theorem we recall a well known property of formal power series with integer coefficients: if $f\in\Z[[q]]$ and $p$ is a prime number, then for each $k\in\N_{+}$ we have $f(q^{p^{k}})\equiv f(q)^{p^{k}}\pmod*{p}$.

Let $p_{1}$ be prime. Using the mentioned property we note the following chain of modulo $p_{1}$ equivalences:
\begin{align*}
\sum_{n=0}^{\infty}A_{p_{1},p_{2}}(n)q^{n}&=\prod_{n=1}^{\infty}\frac{(1-q^{(p_{1}n)^{d}})(1-q^{(p_{2}n)^{d}})}{(1-q^{n^{d}})(1-q^{(p_{1}p_{2}n)^{d}})}\equiv \prod_{n=1}^{\infty}\frac{(1-q^{n^{d}})^{p_{1}^{d}}(1-q^{(p_{2}n)^{d}})}{(1-q^{n^{d}})(1-q^{(p_{2}n)^{d}})^{p_{1}^{d}}}\\
              &\equiv \prod_{n=1}^{\infty}\left(\frac{1-q^{n^{d}}}{1-q^{(p_{2}n)^{d}}}\right)^{p_{1}^{d}-1}\pmod*{p_{1}}.
\end{align*}
However, if $p_{1}=2$, then we clearly have
$$
\prod_{n=1}^{\infty}\left(\frac{1-q^{n^{d}}}{1-q^{(p_{2}n)^{d}}}\right)^{2^{d}-1}\equiv \prod_{n=1}^{\infty}\left(\frac{1+q^{n^{d}}}{1+q^{(p_{2}n)^{d}}}\right)^{2^{d}-1}=\sum_{n=0}^{\infty}B_{2,p_{2}}(n)q^{n}
$$
and by comparison of coefficients on both sides of extreme expressions our first result follows.

If $p_{1}\geq 3$, then
$$
\prod_{n=1}^{\infty}\left(\frac{1-q^{n^{d}}}{1-q^{(p_{2}n)^{d}}}\right)^{p_{1}^{d}-1}=\prod_{n=1}^{\infty}\left(\frac{1-q^{(p_{2}n)^{d}}}{1-q^{n^{d}}}\right)^{1-p_{1}^{d}}=\left(\sum_{n=0}^{\infty}C_{p_{1},p_{2}}(n)\right)^{-1},
$$
and thus
$$
\mathcal{A}_{p_{1},p_{2}}(q)\mathcal{C}_{p_{1},p_{2}}(q)\equiv 1\pmod*{p_{1}}.
$$
Consequently, by comparison of coefficients on both sides of the above congruence, we get the second part of our theorem.
\end{proof}

\begin{rem}
{\rm If we assume that $p_{1}, p_{2}$ are both primes then performing the same reasoning as in the proof of the theorem above with respect to modulus $p_{2}$ instead of $p_{1}$, then we get an additional congruence
$$
\mathcal{A}_{p_{1},p_{2}}(q)\mathcal{C}_{p_{2},p_{1}}(q)\equiv 1\pmod*{p_{2}}.
$$
We thus have the following congruence
$$
\mathcal{A}_{p_{1},p_{2}}(q)\equiv \frac{a}{C_{p_{1},p_{2}}(q)}+\frac{b}{C_{p_{2},p_{1}}(q)}\pmod*{p_{1}p_{2}},
$$
where $a, b$ are the unique solutions of the system of congruences
$$
a\equiv 1\pmod*{p_{1}},\quad a\equiv 0\pmod*{p_{2}},\quad b\equiv 0\pmod*{p_{1}},\quad b\equiv 1\pmod*{p_{1}}.
$$
Consequently, we get the congruence
$$
\sum_{i_{1}+i_{2}+i_{3}=n}A_{p_{1},p_{2}}(i_{1})C_{p_{1},p_{2}}(i_{2})C_{p_{2},p_{1}}(i_{3})\equiv aC_{p_{2},p_{1}}(n)+bC_{p_{1},p_{2}}(n)\pmod*{p_{1}p_{2}}.
$$

}
\end{rem}

\begin{rem}
{\rm It should be noted that the number $A_{2,p_{2}}(n)$ has also a different interpretation. More precisely, if $D_{p_{2}}(n)$ denotes the number of partitions into $d$-th powers of odd integers in which no part appears more then $p_{2}^{d}-1$ times, then $A_{2,p_{2}}(n)=D_{p_{2}}(n)$. Indeed, this can be deduced from the general theorem concerning partition ideals \cite[Theorem 8.4]{And0} or can be directly proved by performing simple manipulations of infinite products. We owe this accurate remark to G. Andrews \cite{And2}.
}
\end{rem}

\section{Questions and conjectures concerning the sequence $(p_{d}(n))_{n\in\N}$}\label{sec3}

Let $d\in\N_{\geq 2}$ be fixed. In this section we state some questions and conjectures concerning certain aspects of arithmetic behavior of functions counting $d$-th power partitions.

Let us write
$$
P_{d}(q)=\prod_{n=1}^{\infty}\frac{1}{1-q^{n^{d}}}=\sum_{n=0}^{\infty}p_{d}(n)q^{n}.
$$

Using standard method of logarithmic differentiation we get that
$$
q\frac{P_{d}'(q)}{P_{d}(q)}=\sum_{n=1}^{\infty}\frac{n^{d}q^{n^{d}}}{1-q^{n^{d}}}=\sum_{n=1}^{\infty}\sigma^{(d)}(n)q^{n},
$$
where
$$
\sigma^{(d)}(n)=\sum_{k^{d}|n}k^{d},
$$
with the usual convention that $\sigma^{(d)}(0)=0$. Consequently, after simple manipulations we get the recurrence relation satisfied by the sequence $(p_{d}(n))_{n\in\N}$:
$$
np_{d}(n)=\sum_{i=0}^{n-1}\sigma^{(d)}(i)p_{d}(n-i), \quad p_{d}(0)=1.
$$
This formula can be used to compute $p_{d}(n)$ in terms of $p_{d}(i), i<n$. However, even for relative small values of $n$ the computations are slow. It would be interesting whether there exist a different recurrence formula for $p_{d}(n)$ allowing faster computation for big values of $n$.

For $d=2, 3, 4, 5$ we computed the coefficients $p_{d}(n)$ for $n\leq 10^{5}$. In order to compute these coefficients we use the following approach. First of all we note that
$$
P_{d}(q)-\prod_{i=1}^{\left \lceil 10^{5/d}\right\rceil }\frac{1}{1-q^{i^{d}}}=O(q^{10^{5}+1}),
$$
i.e., instead of working with infinite product $P_{d}(q)$, it is enough to work with a rational function. Thus, if we write
$$
P_{d,k}(q)=\prod_{i=1}^{k}\frac{1}{1-q^{i^{d}}}=\sum_{n=0}^{\infty}p_{d,k}(n)q^{n},
$$
then $p_{d,k}(n)=p_{d}(n)$ for $n\leq k^{d}$. Note that for fixed $k, d$ the sequence $(p_{d,k}(n))_{n\in\N}$ satisfies linear type sequence. More precisely, we have $p_{d,1}(n)=1$ and for $k\geq 2$ the following relations hold:
$$
p_{d,k}(n)=p_{d,k-1}(n),\;\mbox{for}\;n\leq k^{d},\quad p_{d,k}(n)=p_{d,k-1}(n)+p_{d,k}(n-k^d)\;\mbox{for}\;n\geq k^{d}.
$$

We used the above observation to compute $p_{d}(n)$ for $d=2$ and $n\leq 10^5$ and $p_{d}(n)$ for $d=3, 4, 5$ and $n\leq 10^{6}$. Thus, in order to compute $p_{2}(n)$ for $n\leq 10^{5}$ we need to take $k=\lceil 10^{5/2}\rceil=317$. Similarly, in order to compute $p_{d}(n)$ for $d=3, 4, 5$ for $n\leq 10^6$ we take $k=10^2, 32, 16$, respectively.

All computations were performed on a typical laptop with 16 GB of memory and i7 type processor. 

Based on our data we formulate several question and conjectures. We start with the following natural

\begin{conj}\label{conjP0}
Let $d\in\N_{\geq 2}$ and $m\in\N_{\geq 2}$ be given, and take $r\in\{0,\ldots,m-1\}$. Then there are infinitely many values of $n\in\N$ such that $p_{d}(n)\equiv r\pmod*{m}$.
\end{conj}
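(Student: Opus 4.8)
The plan is to reduce to prime moduli and then exploit a self-similarity of $P_{d}$ modulo a prime that arises from splitting the defining product according to divisibility by $p$. As a first target it suffices to treat the case $m=p$ prime (the prime-power and composite cases are the natural next steps, the genuine difficulty being already present for $m=p$; note that the naive reduction by CRT is not automatic, since hitting each residue infinitely often modulo each prime power does not by itself produce a single index realizing a prescribed residue modulo the product). So fix a prime $p$ and split
$$
P_{d}(q)=\prod_{p\nmid n}\frac{1}{1-q^{n^{d}}}\cdot\prod_{p\mid n}\frac{1}{1-q^{n^{d}}}.
$$
Writing $n=pk$ in the second product and using the Frobenius congruences recalled in the proof of Theorem~\ref{thm2} (namely $(1-x)^{p^{d}}\equiv 1-x^{p^{d}}\pmod*{p}$ applied with $x=q^{k^{d}}$, so that $(pk)^{d}=p^{d}k^{d}$, together with $P_{d}(q)^{p^{d}}\equiv P_{d}(q^{p^{d}})\pmod*{p}$) one obtains the functional congruence
$$
P_{d}(q)\equiv Q_{d}(q)\,P_{d}(q^{p^{d}})\pmod*{p},\qquad Q_{d}(q):=\prod_{p\nmid n}\frac{1}{1-q^{n^{d}}},
$$
where $Q_{d}$ is the generating function for partitions into $d$-th powers of integers prime to $p$.

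This self-similar relation is the engine of the argument. Expanding it yields, for every $n$,
$$
p_{d}(n)\equiv\sum_{k\geq 0}q_{d}(n-p^{d}k)\,p_{d}(k)\pmod*{p},\qquad q_{d}(j):=[q^{j}]Q_{d}(q),
$$
so that the residues on the "scale" $p^{d}k$ are governed by the residues on the smaller scale $k$. The idea is to bootstrap: starting from the known small values $p_{d}(k)\bmod p$, the congruence propagates information to the scale $p^{d}k$, and iterating produces an infinite family of indices whose residues one can track. In parallel I would first establish the qualitative statement that $(p_{d}(n)\bmod m)$ is not eventually periodic. This should follow from the lacunarity of the exponents $n^{d}$, whose counting function grows like $x^{1/d}$: were $(p_{d}(n)\bmod p)$ eventually periodic of period $T$, then $(1-q^{T})P_{d}(q)$ would be a polynomial modulo $p$, forcing $\prod_{n}(1-q^{n^{d}})$ to be a rational function modulo $p$; but the appearance of genuinely new factors $1-q^{n^{d}}$ at arbitrarily large gaps contradicts the bounded complexity of a rational function, and a gap argument makes this precise.

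The hard part will be upgrading aperiodicity to the full conclusion that every residue $r$ is attained infinitely often. Non-periodicity guarantees that the sequence is genuinely irregular, but not that it is surjective onto $\Z/p\Z$ along an infinite set of indices; controlling \emph{which} residues actually occur requires understanding the coefficients of $Q_{d}$ with enough precision to guarantee, through the functional congruence, that no class is eventually avoided. This is precisely the obstruction that keeps the analogous questions open already for the ordinary partition function $p(n)$ modulo small primes — the parity problem and its refinements — and here the situation is harder still, since for $d\geq 2$ the products $\prod_{n}(1-q^{n^{d}})$ are not modular forms, so the machinery of Hecke operators and Galois representations that supplies equidistribution statements in the classical setting is unavailable. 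For this reason I expect a complete proof to be out of reach by these elementary means, and a realistic outcome to be partial: both classes for $m=2$, or results of the shape "$p_{d}(n)$ meets a positive proportion of the residues modulo $m$ infinitely often," rather than the conjecture in full.
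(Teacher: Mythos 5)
You should first note what the paper actually does with this statement: it is Conjecture~\ref{conjP0}, not a theorem. The paper offers no proof at all, only computational evidence (Tables~1--4, covering $n\leq 10^{5}$ for $d=2$ and $n\leq 10^{6}$ for $d=3,4,5$) together with the recurrence $np_{d}(n)=\sum_{i=0}^{n-1}\sigma^{(d)}(i)p_{d}(n-i)$ used to generate the data. So there is no proof in the paper to match your attempt against, and your own attempt, by its explicit admission in the final paragraph, also does not prove the statement: the decisive step --- ruling out that some residue class $r$ is eventually avoided --- is left entirely open. That gap is genuine and is exactly where the problem lives; your assessment that it is out of reach by these means is consistent with the paper treating the statement as a conjecture.

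Two technical points about the partial machinery you set up. First, your ``functional congruence'' is weaker than you present it: since $\prod_{p\mid n}(1-q^{n^{d}})^{-1}=\prod_{k\geq 1}(1-q^{(pk)^{d}})^{-1}=P_{d}(q^{p^{d}})$, the relation $P_{d}(q)=Q_{d}(q)P_{d}(q^{p^{d}})$ is an \emph{exact identity} of formal power series, not a mod-$p$ phenomenon, and no Frobenius congruence is needed (or used) to derive it. Consequently the convolution $p_{d}(n)=\sum_{k}q_{d}(n-p^{d}k)p_{d}(k)$ is trivial bookkeeping --- every partition splits uniquely into its parts divisible by $p^{d}$ and the rest --- and carries no arithmetic leverage modulo $p$; the genuinely mod-$p$ relation in the spirit of Theorem~\ref{thm2} would be $P_{d}(q)\equiv Q_{d}(q)P_{d}(q)^{p^{d}}\pmod*{p}$, which you mention but do not exploit. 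Second, your intermediate aperiodicity claim is itself unproven as stated: eventual periodicity of $(p_{d}(n)\bmod p)$ would indeed make $P_{d}$, and hence $\prod_{n}(1-q^{n^{d}})$, rational over $\F_{p}$, but rationality is a statement about a linear recurrence on coefficients, and your ``genuinely new factors at arbitrarily large gaps'' argument does not engage with it --- for $d\geq 2$ there is no pentagonal-number-type theorem making $\prod_{n}(1-q^{n^{d}})$ lacunary, and new factors in partial products do not visibly obstruct a linear recurrence for the limit. Moreover, even a complete aperiodicity proof would not yield the conjecture, since an aperiodic sequence can omit a residue class entirely. So the proposal should be read as a (thoughtful, mostly honest) research program with two unproven steps, not as a proof, which matches the status the statement has in the paper.
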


 The next question concerns asymptotic behavior of the number of solutions of the congruence $p_{d}(n)\equiv r\pmod*{m}$, where $m\in\N_{\geq 2}$ and $r\in\{0,\ldots,m-1\}$.

\begin{ques}\label{quesP1}
Let $d\in\N_{\geq 2}$ and $m\in\N_{\geq 2}$ be given, and take $r\in\{0,\ldots,m-1\}$. Do the values of $p_{d}(n)\bmod{m}$ are equidistributed modulo $m$? More precisely, does the equality
\begin{equation*}
\limsup_{N\rightarrow  +\infty}\frac{\#\{n\leq N:\;p_{d}(n)\equiv r\pmod*{m}\}}{N}=\frac{1}{m}.
\end{equation*}
hold?
\end{ques}

This is very difficult question. We do not even know any equidistribution modulo $m$ result for the classical partition function $p(n)=p_{1}(n)$ for any $m$. In fact, the expectation is that for $m$ co-prime to 6 the values of $p_{1}(n)\bmod{m}$ are not equidistributed. However, it is not clear what to expect in our situation because there is not connections to modular forms and Galois representations as in case of classical partition function.
However, our computations of the quantities
$$
\#\{n\leq m_{d}:\;p_{d}(n)\equiv r\pmod*{m}\}
$$
for $d=2, 3, 4, 5$ seems to confirm Conjecture \ref{conjP0} and the equality stated in Question \ref{quesP1} (at least for $m\leq 10$). See the Table 1, Table 2, Table 3 and Table 4, below.

\begin{equation*}
\begin{array}{c||cccccccccc}
\hline
m\backslash r & 0     & 1     & 2        &3        &4        &5        &6        &        7& 8       & 9   \\
\hline
\hline
  2   & 50299 & 49702 & \text{} & \text{} & \text{} & \text{} & \text{} & \text{} & \text{} & \text{} \\
  3   & 33373 & 33249 & 33379 & \text{} & \text{} & \text{} & \text{} & \text{} & \text{} & \text{} \\
  4   & 25252 & 24695 & 25047 & 25007 & \text{} & \text{} & \text{} & \text{} & \text{} & \text{} \\
  5   & 19940 & 20125 & 19971 & 19955 & 20010 & \text{} & \text{} & \text{} & \text{} & \text{} \\
  6   & 16769 & 16454 & 16735 & 16604 & 16795 & 16644 & \text{} & \text{} & \text{} & \text{} \\
  7   & 14121 & 14272 & 14320 & 14401 & 14257 & 14301 & 14329 & \text{} & \text{} & \text{} \\
  8   & 12679 & 12288 & 12496 & 12371 & 12573 & 12407 & 12551 & 12636 & \text{} & \text{} \\
  9   & 11158 & 11081 & 11033 & 10941 & 11186 & 11239 & 11274 & 10982 & 11107 & \text{} \\
  10  & 10001 & 10025 & 10024 & 9866 & 10085 & 9939 & 10100 & 9947 & 10089 & 9925 \\
\hline
\end{array}
\end{equation*}
\begin{center}
Table 1. Values of $\#\{n\leq 10^{5}:\;p_{2}(n)\equiv r\pmod*{m}\}$ for $0\leq r\leq m-1\leq 9$.
\end{center}

\begin{equation*}
\begin{array}{c||cccccccccc}
\hline
m\backslash r& 0     & 1     & 2        &3        &4        &5        &6        &        7& 8       & 9   \\
\hline
\hline
  2   &   500013 & 499988 & \text{} & \text{} & \text{} & \text{} & \text{} & \text{} & \text{} & \text{} \\
  3   &   333942 & 333563 & 332496 & \text{} & \text{} & \text{} & \text{} & \text{} & \text{} & \text{} \\
  4   &   250099 & 249905 & 249914 & 250083 & \text{} & \text{} & \text{} & \text{} & \text{} & \text{} \\
  5   &   199907 & 200126 & 200490 & 199879 & 199599 & \text{} & \text{} & \text{} & \text{} & \text{} \\
  6   &   167109 & 166685 & 166026 & 166833 & 166878 & 166470 & \text{} & \text{} & \text{} & \text{} \\
  7   &   142501 & 142721 & 142969 & 143340 & 142937 & 142913 & 142620 & \text{} & \text{} & \text{} \\
  8   &   125203 & 124636 & 125023 & 125198 & 124896 & 125269 & 124891 & 124885 & \text{} & \text{} \\
  9   &   111451 & 111275 & 111186 & 111459 & 110992 & 110438 & 111032 & 111296 & 110872 & \text{} \\
  10  &   100033 & 100134 & 100021 & 99625 & 99713 & 99874 & 99992 & 100469 & 100254 & 99886 \\
\hline
\end{array}
\end{equation*}
\begin{center}
Table 2. Values of $\#\{n\leq 10^{6}:\;p_{3}(n)\equiv r\pmod*{m}\}$ for $0\leq r\leq m-1\leq 9$.
\end{center}

\begin{equation*}
\begin{array}{c||cccccccccc}
\hline
m\backslash r& 0     & 1     & 2        &3        &4        &5        &6        &        7& 8       & 9   \\
\hline
\hline
  2   &   500517 & 499484 & \text{} & \text{} & \text{} & \text{} & \text{} & \text{} & \text{} & \text{} \\
  3   &   333153 & 333474 & 333374 & \text{} & \text{} & \text{} & \text{} & \text{} & \text{} & \text{} \\
  4   &   250463 & 249010 & 250054 & 250474 & \text{} & \text{} & \text{} & \text{} & \text{} & \text{} \\
  5   &   200555 & 199837 & 199524 & 200091 & 199994 & \text{} & \text{} & \text{} & \text{} & \text{} \\
  6   &   166388 & 166699 & 167354 & 166765 & 166775 & 166020 & \text{} & \text{} & \text{} & \text{} \\
  7   &   143174 & 142713 & 143172 & 142658 & 142908 & 142621 & 142755 & \text{} & \text{} & \text{} \\
  8   &   125224 & 124544 & 125595 & 125373 & 125239 & 124465 & 124459 & 125101 & \text{} & \text{} \\
  9   &   111012 & 111100 & 111214 & 111263 & 111238 & 111071 & 110878 & 111136 & 111089 & \text{} \\
  10  &   100310 & 99810 & 99660 & 99706 & 100135 & 100245 & 100027 & 99864 & 100385 & 99859 \\
\hline
\end{array}
\end{equation*}
\begin{center}
Table 3. Values of $\#\{n\leq 10^{6}:\;p_{4}(n)\equiv r\pmod*{m}\}$ for $0\leq r\leq m-1\leq 9$.
\end{center}

\begin{equation*}
\begin{array}{c||cccccccccc}
\hline
m\backslash r& 0     & 1     & 2        &3        &4        &5        &6        &        7& 8       & 9   \\
\hline
\hline
  2   &    500386 & 499615 & \text{} & \text{} & \text{} & \text{} & \text{} & \text{} & \text{} & \text{} \\
  3   &    334253 & 332498 & 333250 & \text{} & \text{} & \text{} & \text{} & \text{} & \text{} & \text{} \\
  4   &    249768 & 249985 & 250618 & 249630 & \text{} & \text{} & \text{} & \text{} & \text{} & \text{} \\
  5   &    199971 & 199526 & 200089 & 200380 & 200035 & \text{} & \text{} & \text{} & \text{} & \text{} \\
  6   &    167002 & 166054 & 166940 & 167251 & 166444 & 166310 & \text{} & \text{} & \text{} & \text{} \\
  7   &    143141 & 142701 & 142907 & 143029 & 142768 & 143046 & 142409 & \text{} & \text{} & \text{} \\
  8   &    124187 & 125010 & 125168 & 125302 & 125581 & 124975 & 125450 & 124328 & \text{} & \text{} \\
  9   &    111905 & 111078 & 110740 & 110779 & 111233 & 111095 & 111569 & 110187 & 111415 & \text{} \\
  10  &    100264 & 99955 & 100250 & 100380 & 100301 & 99707 & 99571 & 99839 & 100000 & 99734 \\
\hline
\end{array}
\end{equation*}
\begin{center}
Table 4. Values of $\#\{n\leq 10^{6}:\;p_{5}(n)\equiv r\pmod*{m}\}$ for $0\leq r\leq m-1\leq 9$.
\end{center}

In the context of classical partition function of Euler, i.e., $p(n)=p_{1}(n)$, where there are plenty of triples $a, b, m$, where $m\in\N_{\geq 5}$ and $a, b\in\N_{+}$, such that $p(an+b)\equiv 0\pmod*{m}$ for all $n\in\N$, one can ask the following:

\begin{ques}
Let $d\in\N_{\geq 2}$ be fixed. Does there exist $m\in\N_{\geq 2}, r\in\{0,\ldots, m-1\}$, and positive integers $a, b$ such that for each $n\in\N$ we have $p_{d}(an+b)\equiv r\pmod*{m}$?
\end{ques}

In the considered range we were unable to find a single quadruple $(m, r, a, b)$ and $d\in\{2,3,4,5\}$ such that $p_{d}(an+b)\equiv r\pmod*{m}$ for $n=0,1\ldots, 100$. In order to guarantee that $100a+b\leq 10^{5}$ we considered the range $a\in\{2,\ldots, 999\}, b\in\{0,\ldots, a-1\}$.
This may suggest that even if there are quadruplets $(m, r, a, b)$ such that $p_{d}(an+b)\equiv r\pmod*{m}$ for all $n$, they are rare.

\bigskip

Let us recall that a sequence $(a_{n})_{n\in\N}$ is convex, if $2a_{n}\leq a_{n-1}+a_{n+1}$ for $n\geq 1$. We formulate the following general

\begin{conj}
Let $d\in\N_{\geq 2}$. Then there is an integer $N_{d}$ such that for all $n\geq N_{d}$ we have $2p_{d}(n)\leq (p_{d}(n-1)+p_{d}(n+1))\left(1-\frac{1}{n^d}\right)$. In particular the sequence $(p_{d}(n))_{n\geq N_{d}}$ is convex.
\end{conj}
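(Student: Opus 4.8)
The plan is to reduce the stated inequality to an estimate for the discrete second difference of $\log p_d(n)$ and then to feed in the known asymptotics. Put $g(n)=\log p_d(n)$ and
$$u_n=g(n)-g(n-1),\qquad v_n=g(n+1)-g(n),$$
both positive and tending to $0$ for large $n$ since $p_d$ is eventually increasing. Dividing the target inequality by $p_d(n)>0$ turns it into $e^{-u_n}+e^{v_n}\ge\tfrac{2}{1-n^{-d}}$, i.e.
$$e^{-u_n}+e^{v_n}-2\ \ge\ \frac{2n^{-d}}{1-n^{-d}}.$$
The right-hand side is asymptotic to $2n^{-d}$, so everything comes down to showing that the left-hand side eventually dominates $2n^{-d}$. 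Note that the displayed inequality immediately yields the convexity claim: since $0<1-n^{-d}<1$ and $p_d(n-1)+p_d(n+1)>0$, it forces $2p_d(n)\le p_d(n-1)+p_d(n+1)$ for $n\ge N_d$.

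Next I would record the asymptotics. By the Hardy--Ramanujan--Wright result (\cite{MW}, with the proofs in \cite{V,AG,TWL}) one has $g(n)\sim\alpha_d n^{\gamma}$, where $\gamma=\tfrac{1}{d+1}\in(0,1)$ and $\alpha_d=(d+1)\big(\tfrac1d\Gamma(1+\tfrac1d)\zeta(1+\tfrac1d)\big)^{d/(d+1)}$. The bare leading term, however, will not suffice: to control a \emph{second} difference one needs a genuine asymptotic expansion. The saddle-point analysis behind \cite{TWL} (equivalently, the Meinardus expansion carried to higher order) should give, for every fixed $K$,
$$g(n)=\alpha_d n^{\gamma}+\kappa_d\log n+c_0+\sum_{j=1}^{K}c_j n^{-j\gamma}+R_K(n),\qquad R_K(n)=O\!\left(n^{-(K+1)\gamma}\right),$$
with explicit $\kappa_d$ and $c_j$. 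Choosing $K$ so large that $(K+1)\gamma>\tfrac{2d}{d+1}$ makes $R_K(n)=o(n^{-2d/(d+1)})$, hence also $R_K(n\pm1)-R_K(n)=o(n^{-2d/(d+1)})$.

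With such an expansion the remaining steps are elementary. Differentiating discretely, each power satisfies $(n+1)^s-n^s=sn^{s-1}+O(n^{s-2})$ and $\log(n+1)-\log n=O(1/n)$, so
$$v_n=\alpha_d\gamma\,n^{\gamma-1}\big(1+o(1)\big),\qquad u_n=\alpha_d\gamma\,n^{\gamma-1}\big(1+o(1)\big),$$
with $\gamma-1=-\tfrac{d}{d+1}$, while the second difference is governed by the leading term,
$$v_n-u_n=g(n+1)-2g(n)+g(n-1)=\alpha_d\gamma(\gamma-1)\,n^{\gamma-2}\big(1+o(1)\big),$$
of order $n^{-(2d+1)/(d+1)}$. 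Expanding the exponentials,
$$e^{-u_n}+e^{v_n}-2=(v_n-u_n)+\tfrac12\big(u_n^2+v_n^2\big)+O\big(u_n^3+v_n^3\big),$$
the quadratic term dominates, giving $e^{-u_n}+e^{v_n}-2\sim\alpha_d^2\gamma^2\,n^{-2d/(d+1)}$, since both $v_n-u_n$ and the cubic remainder are of strictly smaller order. Finally, for $d\ge2$ we have $\tfrac{2d}{d+1}<d$, so $n^{-2d/(d+1)}/n^{-d}=n^{d(d-1)/(d+1)}\to\infty$; hence the left-hand side eventually exceeds $\tfrac{2n^{-d}}{1-n^{-d}}$, which proves the inequality for all $n$ beyond some $N_d$.

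The hard part will be the middle step, namely securing an asymptotic expansion of $\log p_d(n)$ that may be differenced twice. The first-order asymptotic alone is not enough: an unstructured error $E(n)$ in $g(n)=\alpha_d n^{\gamma}+E(n)$ need not have controllable differences $E(n+1)-E(n)$, and these feed directly into $u_n,v_n$ and into $v_n-u_n$. The cure is to push the circle/saddle-point method to a full expansion in descending fractional powers of $n$---whose terms are smooth and can be differenced one by one---with a remainder driven below $n^{-2d/(d+1)}$; only then are the discrete derivatives legitimate and the remainder negligible. Establishing this refined, uniform expansion (rather than merely quoting the leading term) is the genuine technical content; the Taylor expansion and exponent bookkeeping above are then routine.
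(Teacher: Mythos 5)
First, a point of order: this statement is a \emph{conjecture} in the paper. The author gives no proof --- only numerical verification up to $10^5$ (for $d=2$) and $10^6$ (for $d=3,4$), yielding the candidate thresholds $N_2=379$, $N_3=6769$, $N_4=239603$, together with the remark (made for the companion log-concavity conjecture) that Wright's asymptotic formula or its refinements should ``very likely'' resolve it. So there is no proof in the paper to compare yours against, and the relevant question is whether your proposal actually settles the conjecture.

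It does not, though the outline is sound and the bookkeeping is correct as far as it goes. Dividing by $p_d(n)$ correctly reduces the claim to $e^{-u_n}+e^{v_n}-2\geq 2n^{-d}/(1-n^{-d})$; with $\gamma=\frac{1}{d+1}$ the quadratic term $\frac{1}{2}(u_n^2+v_n^2)\sim\alpha_d^2\gamma^2 n^{-2d/(d+1)}$ indeed dominates both the (negative) second difference $v_n-u_n=O(n^{\gamma-2})=o(n^{2\gamma-2})$ and the cubic remainder, and $\frac{2d}{d+1}<d$ holds exactly when $d\geq 2$, so the comparison with $2n^{-d}$ closes. But the whole argument is conditional on the expansion of $g(n)=\log p_d(n)$ with a remainder of size $o(n^{-2d/(d+1)})$, which you introduce with ``should give'' and never establish or cite precisely --- and you yourself flag this as the genuine technical content. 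This is the gap, and it is essential rather than cosmetic: the only asymptotic quoted in the paper, $\log p_d(n)\sim\alpha_d n^{\gamma}$, gives no control whatsoever on $u_n$, $v_n$, or $v_n-u_n$, since an unstructured $o(n^{\gamma})$ error swamps all the quantities you need. The good news is that the missing ingredient is essentially available: Wright \cite{MW} proves a full asymptotic expansion of $p_d(n)$ itself (not merely the logarithm of its leading behavior), and the saddle-point treatment in \cite{TWL} gives a modern route to the same; from either one can extract an expansion of $g$ in descending fractional powers of $n$ (with exponents in the lattice generated by $\frac{1}{d+1}$ and $\frac{d}{d+1}$, plus a $\log n$ term) whose smooth terms can be differenced and whose remainder can be driven below $n^{-2d/(d+1)}$. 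Until that expansion is stated precisely and verified, your text is a credible proof plan --- consistent with the paper's own speculation about how the conjecture should be attacked --- but the conjecture remains open as the paper presents it.
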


The above conjecture can be seen as a natural generalization of log-concavity of the classical partition function $p(n)=p_{1}(n)$.  We checked that
$$
2p_{2}(n)\leq (p_{2}(n-1)+p_{2}(n+1))\left(1-\frac{1}{n^2}\right)\quad\mbox{for} \quad n\in\{379, \ldots, 10^{5}-1\},
$$
and
$$
2p_{3}(n)\leq (p_{3}(n-1)+p_{3}(n+1))\left(1-\frac{1}{n^3}\right)\quad\mbox{for} \quad n\in\{6769, \ldots, 10^{6}-1\},
$$
and
$$
2p_{4}(n)\leq (p_{4}(n-1)+p_{4}(n+1))\left(1-\frac{1}{n^4}\right)\quad\mbox{for} \quad n\in\{239603, \ldots, 10^{6}-1\},
$$
i.e., we believe that $N_{2}=379, N_{3}=6769, N_{4}=239603$.

It seems that the number $N_{5}$ (if is exists) is $\geq 10^{6}$.

\bigskip

Let us recall that a sequence  $(a_{n})_{n\in\N}$ of positive reals is log-concave, if $a_{n}^{2}\geq a_{n-1}a_{n+1}$ for $n\geq 1$, i.e., the sequence $(-\log a_{n})_{n\in\N}$ is convex. We formulate the following general.

\begin{conj}
Let $d\in\N_{\geq 2}$. Then there is an integer $M_{d}$ such that for all $n\geq M_{d}$ we have
$$
p_{d}^{2}(n)\geq p_{d}(n-1)p_{d}(n+1)\left(1+\frac{1}{n^{d}}\right).
$$
In particular the sequence $(p_{d}(n))_{n\geq M_{d}}$ is log-concave.
\end{conj}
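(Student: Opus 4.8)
The plan is to take logarithms and reduce the claimed inequality to a quantitative statement about the second difference of $\log p_d(n)$. Writing $\Delta^2 u(n):=u(n+1)-2u(n)+u(n-1)$, the assertion $p_d(n)^2\ge p_d(n-1)p_d(n+1)\bigl(1+n^{-d}\bigr)$ is equivalent to
$$
-\Delta^2\log p_d(n)\ge \log\Bigl(1+\frac{1}{n^{d}}\Bigr).
$$
Since $\log(1+n^{-d})=n^{-d}+O(n^{-2d})$, it suffices to show that $-\Delta^2\log p_d(n)$ is positive of an order strictly larger than $n^{-d}$ for all large $n$.

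First I would fix an asymptotic expansion of the shape
$$
\log p_d(n)=\Phi(n)+E(n),\qquad E(n)=O(n^{-K}),
$$
where $\Phi$ is a finite $\R$-linear combination of functions $n^{\rho}$ (and possibly $\log n$), with leading term $\Phi(n)\sim c_d\,n^{1/(d+1)}$, $c_d>0$, and where $K$ can be made as large as we wish by retaining enough terms. Such an expansion follows from the circle-method analysis of Gafni \cite{AG} and the saddle-point treatment of Tenenbaum, Wu and Li \cite{TWL}; the exponent $1/(d+1)$ is the one dictated by the counting function $x^{1/d}$ of the set of $d$-th powers (cf. Wright \cite{MW}). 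Set $\lambda:=\tfrac{1}{d+1}\in(0,1)$.

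Next I would estimate the two pieces of $-\Delta^2\log p_d(n)=-\Delta^2\Phi(n)-\Delta^2 E(n)$ separately. For the smooth part, the second-order mean value theorem for finite differences gives $\Delta^2\Phi(n)=\Phi''(\zeta_n)$ with $\zeta_n\in(n-1,n+1)$, and term-by-term differentiation yields $\Phi''(x)=c_d\,\lambda(\lambda-1)\,x^{\lambda-2}+(\text{terms of order }x^{\rho-2},\ \rho<\lambda)$. As $0<\lambda<1$ we have $\lambda(\lambda-1)<0$, so $-\Phi''(\zeta_n)=c_d\,\lambda(1-\lambda)\,n^{\lambda-2}\bigl(1+o(1)\bigr)>0$, the leading term dominating all subdominant contributions. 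For the remainder no smoothness is needed: the triangle inequality gives $|\Delta^2 E(n)|\le |E(n+1)|+2|E(n)|+|E(n-1)|=O(n^{-K})$. Now one checks that
$$
(\lambda-2)-(-d)=\frac{1}{d+1}+d-2>0\qquad\text{for every }d\ge 2,
$$
so $n^{\lambda-2}$ dominates $n^{-d}$, and choosing $K>2-\lambda=\tfrac{2d+1}{d+1}$ (for instance $K=2$, which works for all $d$) makes the remainder negligible against $n^{\lambda-2}$. Combining these facts gives $-\Delta^2\log p_d(n)=c_d\,\lambda(1-\lambda)\,n^{\lambda-2}\bigl(1+o(1)\bigr)\ge\log(1+n^{-d})$ for all large $n$, proving the existence of $M_d$.

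The main obstacle is the second step: securing the expansion of $\log p_d(n)$ with \emph{rigorous and uniform} control of $E(n)=O(n^{-K})$ and with enough terms that $K$ exceeds $2-\lambda$. The leading term is classical, but organizing the subdominant contributions, and, if one wants an \emph{explicit} value of $M_d$, tracking the implied constants through the saddle-point or circle-method estimates, is the delicate part. By contrast, the discrete second-differencing is harmless, precisely because it lowers the order of the smooth main term by $2$ while only inflating the already small remainder by a bounded factor.
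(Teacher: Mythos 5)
This statement is one of the paper's \emph{conjectures}: the paper contains no proof of it at all. The author only verifies the inequality numerically (for $d=2$ up to $10^5$, for $d=3,4$ up to $10^6$, with conjectured thresholds $M_2=1042$, $M_3=15656$, $M_4=637855$) and remarks that ``it is very likely'' the conjecture can be resolved using Wright's classical asymptotic formula or its current improvements. Your proposal is exactly that envisioned route, so there is no paper proof to compare against; it has to be judged on its own terms, and on its own terms it is a conditional sketch with one genuine gap, which you yourself flag but do not close.

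The skeleton is sound: the reduction to $-\Delta^2\log p_d(n)\geq\log(1+n^{-d})$ is a correct equivalence (note $p_d(n)\geq 1$ for all $n$, so logarithms are legitimate), the mean-value identity $\Delta^2\Phi(n)=\Phi''(\zeta_n)$ and the triangle-inequality bound $|\Delta^2 E(n)|=O(n^{-K})$ are fine, and the decisive exponent count $(\lambda-2)-(-d)=\frac{1}{d+1}+d-2>0$ for $d\geq 2$ is correct --- it is precisely why the correction factor $1+n^{-d}$ is the natural one here, and why the same computation fails at $d=1$ (there $\lambda-2=-\frac{3}{2}<-1$, consistent with DeSalvo--Pak needing a correction of a different order). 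The gap is the postulated input $\log p_d(n)=\Phi(n)+O(n^{-K})$ with $K>2-\frac{1}{d+1}>1$: the modern references you invoke do not deliver it as stated. Gafni and Tenenbaum--Wu--Li prove formulas of the shape $p_d(n)=(\text{explicit factor})\bigl(1+O(n^{-\beta})\bigr)$ with a fixed \emph{small} power saving $\beta$, which is insufficient since you need an error beyond $n^{-1}$; what does suffice is Wright's complete asymptotic expansion, whose relative error can be pushed below any fixed power of $n$ by taking enough terms. Converting Wright's multiplicative expansion into your additive form $\Phi(n)+E(n)$, verifying that every subleading exponent is strictly below $\lambda$, and that the leading coefficient $c_d$ is positive, is exactly the unperformed (nontrivial but plausible) bookkeeping; until it is done, the proposal proves nothing, though it is a credible plan and matches the author's own suggestion. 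One point in your favor: the asymptotic displayed in the paper's introduction carries the exponent $n^{1/d}$, but the correct Wright exponent is $n^{1/(d+1)}$ (check $d=1$ against $\log p(n)\sim\pi\sqrt{2n/3}$), so your choice $\lambda=\frac{1}{d+1}$ corrects what is evidently a typo in the paper rather than introducing an error.
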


We checked that
$$
p_{2}^{2}(n)\geq p_{2}(n-1)p_{2}(n+1)\left(1+\frac{1}{n^{2}}\right)\quad\mbox{for} \quad n\in\{1086, \ldots, 10^{5}-1\},
$$
and
$$
p_{3}^{2}(n)\geq p_{3}(n-1)p_{3}(n+1)\left(1+\frac{1}{n^{3}}\right)\quad\mbox{for} \quad n\in\{15656, \ldots, 10^{6}-1\},
$$
and
$$
p_{4}^{2}(n)\geq p_{4}(n-1)p_{4}(n+1)\left(1+\frac{1}{n^{4}}\right)\quad\mbox{for} \quad n\in\{637855, \ldots, 10^{6}-1\},
$$
i.e., we believe that $M_{2}=1042, M_{3}=15656, M_{4}=637855$.

It seems that the number $ M_{5}$ (if it exists) is $\geq 10^{6}$. It is very likely that using the classical asymptotic formula for $p_{d}(n)$ of Wright \cite{MW} or its current improvements, the above conjecture can be resolved. It should be noted that an analogous result for Euler partition function $p(n)$, i.e., the case of $d=1$ of the above conjecture, was proved by DeSalvo and Pak \cite{DeSP} and recently generalized by Hou and Zhang \cite{HZ}.

\bigskip

\noindent  Maciej Ulas, Jagiellonian University, Faculty of Mathematics and Computer Science, Institute of Mathematics, {\L}ojasiewicza 6, 30 - 348 Krak\'{o}w, Poland\\
e-mail:\;{\tt  maciej.ulas@uj.edu.pl}
\bigskip

 \end{document}